\numberwithin{equation}{section}
\theoremstyle{plain}
\newtheorem{theorem}{Theorem}[section]
\theoremstyle{definition}
\begin{document}

\begin{frontmatter}
\title{An Ordinary Differential Equation Model for Fish Schooling}
\runtitle{Fish Schooling}

\begin{aug}
\author{Takeshi Uchitane$^{\dagger}$, Ta Viet Ton$^{\ddagger}$, Atsushi Yagi$^{\ddagger}$\thanksref{t2}   }
\runauthor{Takeshi Uchitane, Ta Viet Ton and Atsushi Yagi}
\thankstext{t2}{This work is supported by Grant-in-Aid for Scientific Research (No. 20340035) of the Japan Society for the Promotion of Science.}
\affiliation{Osaka University}

\address{$^{\ddagger}$ Department of Information and Physical Science\\ Graduate School of Information Science and Technology, Osaka University\\
Suita Osaka 565-0871, Japan    \\
$^{\dagger}$ Department of Applied Physics\\
 Graduate School of Engineering, Osaka University\\
 Suita Osaka 565-0871, Japan}

\end{aug}

\begin{abstract}
This paper presents a stochastic differential equation model  for describing the process of fish schooling. The model equation always possesses a unique local solution, but global existence can be shown only in some particular cases. Some numerical examples show that the global existence may fail in general.
\end{abstract}

\begin{keyword}[class=MSC]
\kwd{60H10}
\kwd{82C22}
\end{keyword}

\begin{keyword}
\kwd{Swarms}
\kwd{Aggregate motion}
\kwd{Stochastic differential systems}
\kwd{Particle systems}
\end{keyword}
\end{frontmatter}

\section{Introduction}

We are interested in describing the process of fish schooling by the ordinary differential equations. A model written in terms of ODE is very useful. First, the rules of behavior of individual animals can be described precisely. Second, many techniques which have been developed in the theory of ODE can directly be available to analyse their solutions including asymptotic behavior and numerical computations.

We will regard the fish as particles in the space $\mathbb R^d$. The direction in which a fish proceeds is regarded as its forward direction. As for the assumptions of modeling, we will follow the idea presented by Camazine-Deneubourg-Franks-Sneyd-Theraulaz-Bonabeau \cite{CDFSTB} which is also based on empirical results Aoki \cite{Ao}, Huth-Wissel \cite{HW} and Warburton-Lazarus \cite{WL}. In the monograph \cite[Chapter 11]{CDFSTB}, they have made the following assumptions:
\begin{enumerate}
\item  The school has no leaders and each fish follows the same behavioral rules.
\item  To decide where to move, each fish uses some form of weighted average of the position and orientation of its nearest neighbors.
\item  There is a degree of uncertainty in the individual's behavior that reflects both the imperfect information-gathering ability of a fish and the imperfect execution of the fish's actions.
\end{enumerate}
We remark that similar assumptions, but deterministic ones, were also introduced by Reynolds \cite{Re}. 

As seen in Section 2, we formulate the motion of each individual by a system of deterministic and stochastic differential equations. The weight of average is taken analogously to the law of gravitation. That is, for the $i$-th fish at position $x_i$, the interacting force with the $j$-th one at $x_j\, (i \not= j)$ is given by
\begin{equation*}
  - \alpha \Big[\frac1{(\|x_i - x_j\|/r)^p}
    - \frac1{(\|x_i-x_j\|/r)^q}\Big](x_i-x_j),
\end{equation*}
where $1 < p < q < \infty$ are some fixed exponents and $r > 0$ is a critical radius. This means that if $x_i$ and $x_j$ are far enough that $\|x_i-x_j\| > r$, then the interaction is attractive; conversely, if it is opposite $\|x_i-x_j\| < r$, then the interaction is repulsive. The exponents $p,\, q$ and the radius $r$ may depend on the species of animal. The larger $p$ and $q$ are, the shorter the relative range of interactions between two individuals.

A similar weight of average is used for the orientation matching, too, i.e., 
$$-\beta \Big[ \frac1{(\|x_i-x_j\|/r)^p} +
       \frac1{(\|x_i-x_j\|/r)^q} \Big](v_i-v_j).$$
Here, $v_i$ and $v_j$ denote velocities of the $i$-th and $j$-th animals, respectively.

Several kinds of mathematical models have already been presented,  including difference or differential models. Vicsek et al. \cite{VCBO} introduced a simple difference model, assuming that each particle is driven with a constant absolute velocity and  the average direction of motion of the particles in its neighborhood together with some random perturbation.  Oboshi et al. \cite{OKMI} presented another difference model in which an individual selects one basic behavioral pattern from four  based on the distance between it and its nearest neighbor. Finally, Olfati-Saber \cite{ROS} and D\'{}Orsogna et al. \cite{OCBC} constructed a deterministic differential model using a generalized Morse and attractive/repulsive potential functions, respectively.

In this paper, after introducing the model equations, we shall prove local existence of solutions and in some particular cases global existence, too. We shall also present some numerical examples which show robustness of the behavioral rules introduced in \cite[Chapter 11]{CDFSTB} for forming a swarm against the uncertainty of individual's information processing and executing its actions.

In the forthcoming paper, we are going to construct a particle swarm optimization scheme on the basis of the behavioral rules of swarming animals which can spontaneously and successfully find their feeding stations.

The organization of the present paper is as follows.  In the next section, we show our model equations. Section 3 is devoted to  proving local existence of solutions. Section 4 gives  global existence for both deterministic and stochastic cases but the number of animal is only two. Some numerical examples that suggest global existence is not true in general are presented in Section 5.
\section{Model Equations}
We consider motion of $N$ fish. They are regarded as moving particles in the space $\mathbb R^d\, (d=1,2,3,\ldots)$. The position of the $i$-th particle is denoted by $x_i = x_i(t)\, (i=1,2,\ldots,N)$. Its velocity is denoted by $v_i = v_i(t)\, (i=1,2,\ldots,N)$. Our model is then  given by
\begin{equation} \label{eq}
\left\{ \begin{aligned}
  &dx_i = v_i dt + \sigma_i dw_i,    \\
  &dv_i = \Big\{  - \alpha \sum_{j=1,\, j\not=i}^N
    \Big[\frac1{(\|x_i - x_j\|/r)^p}
         - \frac1{(\|x_i-x_j\|/r)^q} \Big](x_i-x_j)   \\
  &{}\qquad\quad\enskip       - \beta \sum_{j=1,\, j\not=i}^N
    \Big[ \frac1{(\|x_i-x_j\|/r)^p} +
       \frac1{(\|x_i-x_j\|/r)^q} \Big](v_i-v_j) \\
  &{}\qquad\quad\enskip       
     + F_i(t,x_i,v_i) \Big\} dt.
\end{aligned} \right.
\end{equation}
The first equation is a stochastic equation on $x_i$, where $\sigma_idw_i(t)$ denotes a noise resulting from the imperfectness of information-gathering and action of the fish.  In fact, $\{w_i(t), t\geq 0\} (i=1,\dots, N)$ are  independent $d$\,{-}\,dimensional Brownian motions defined on a complete probability space 
 with filtration $(\Omega, \mathcal F, \{\mathcal F_t\}_{t\geq 0},\mathbb P)$ satisfying the usual conditions.  The second one is a deterministic equation on $v_i$, where $1 < p < q < \infty$ are fixed exponents, $r > 0$ is a fixed radius and $\alpha,\, \beta$ are  positive constants. Finally, $F_i(t,x_i,v_i)$ denotes an external force at time $t$ which is a given function defined for $(x_i,v_i)$ with values in $\mathbb R^d$. It is assumed that $F_i(t,x_i,v_i)\, (i=1,\dots, N)$ are locally Lipschitz continuous. 

In what follows, for simplicity, we shall put $\alpha_1=\alpha r^p, \beta_1=\beta r^p, \gamma=r^{q-p}.$ Then, the system \eqref{eq} is rewritten in the form
\begin{equation}\label{H2}
\left\{ \begin{aligned}
&dx_i=v_idt +\sigma_i dw_i,\\
&dv_i=\Big\{ -\alpha_1 \sum_{j=1, j\ne i}^N\Big[\frac{1}{||x_i-x_j||^p}-\frac{\gamma}{||x_i-x_j||^q}\Big](x_i-x_j)\\
&{}\qquad\quad\enskip -\beta_1 \sum_{j=1, j\ne i}^N\Big[\frac{1}{||x_i-x_j||^p}+\frac{\gamma}{||x_i-x_j||^q}\Big](v_i-v_j) \\
  &{}\qquad\quad\enskip       + F_i(t,x_i,v_i)\Big\}dt,
\end{aligned}\right.
\end{equation}
for $i=1,\ldots, N.$
\bigskip

\section{Local Solution}
We set the phase space
\begin{align*}
\mathbb R(N)=
\{&(x_1,\ldots, x_N, v_1,\ldots,v_N) \in \mathbb R^{Nd}\times \mathbb R^{Nd} \,{|}\,x_i\ne x_j \\
& (1\leq i, j \leq N, i\ne j) \}. 
\end{align*}
Since all the functions in the right hand side of \eqref{H2}  are locally Lipschitz continuous in $\mathbb R(N)$, the existence and uniqueness of local solutions to  \eqref{H2} starting from points belonging to this phase space are obvious in both deterministic and stochastic cases, see for instance \cite{A,F}. Thus, we have
\begin{theorem} \label{thm0}
For any initial value 
$$
  (x_1(0),\ldots, x_N(0), v_1(0),\ldots,v_N(0)) \in \mathbb R(N),
$$
\eqref{H2} has a unique local solution defined on an interval $[0, \tau)$ with values in $\mathbb R(N)$, where $\tau \le \infty$ and if $\tau < \infty$ it is an explosion time.
\end{theorem}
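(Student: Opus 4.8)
The plan is to recast the coupled system \eqref{H2} as a single vector-valued stochastic differential equation and then to apply the standard localization technique for equations whose coefficients are only locally Lipschitz. Writing $U = (x_1,\ldots,x_N,v_1,\ldots,v_N) \in \mathbb R^{2Nd}$, the system takes the form
\begin{equation*}
dU = b(t,U)\,dt + \Sigma\,dW,
\end{equation*}
where $W = (w_1,\ldots,w_N)$ collects the independent Brownian motions, the diffusion matrix $\Sigma$ is \emph{constant} (it merely places the $\sigma_i$ in the $x$-components and zero in the $v$-components), and $b(t,U)$ gathers the velocities $v_i$ together with the interaction and forcing terms appearing in the drift of $dv_i$. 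The set $\mathbb R(N)$ is open because its complement, the union of the collision sets $\{x_i = x_j\}$, is closed; on $\mathbb R(N)$ the map $b$ is locally Lipschitz in $U$ uniformly on compact $t$-intervals, since the only non-smooth ingredients are the negative powers $\|x_i-x_j\|^{-p}$ and $\|x_i-x_j\|^{-q}$, which are smooth away from the diagonal, while $F_i$ is locally Lipschitz by assumption.

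First I would truncate. For each integer $n$ set
\begin{equation*}
D_n = \Big\{\, U \in \mathbb R(N) : \|U\| \le n \ \text{and}\ \min_{i \ne j}\|x_i - x_j\| \ge \tfrac1n \,\Big\},
\end{equation*}
which is a compact subset of $\mathbb R(N)$, and choose a smooth cutoff to define coefficients $b_n$ that coincide with $b$ on $D_n$ and are \emph{globally} Lipschitz and bounded on all of $\mathbb R^{2Nd}$. For the truncated equation $dU = b_n(t,U)\,dt + \Sigma\,dW$ the standard Picard iteration (contraction mapping) argument for It\^o equations with globally Lipschitz coefficients, as in \cite{A,F}, yields a unique global strong solution $U^{(n)}$ with the prescribed initial value.

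Next I would patch these solutions together. Define the exit times $\tau_n = \inf\{\,t \ge 0 : U^{(n)}(t) \notin D_n\,\}$. Because $b_n = b_{n+1} = b$ on $D_n$, pathwise uniqueness for the truncated equations forces $U^{(n)}$ and $U^{(n+1)}$ to agree on $[0,\tau_n]$, so the $\tau_n$ are non-decreasing and the solutions are consistent. Setting $\tau = \lim_{n\to\infty}\tau_n$ and defining $U(t) = U^{(n)}(t)$ for $t < \tau_n$ produces a well-defined, unique local solution on $[0,\tau)$ taking values in $\mathbb R(N)$. By construction, if $\tau < \infty$ then the solution must leave every $D_n$ before $\tau$, which means that as $t \uparrow \tau$ either $\|U(t)\| \to \infty$ or $\min_{i\ne j}\|x_i(t)-x_j(t)\| \to 0$; hence $\tau$ is genuinely an explosion time.

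The one genuine obstacle is the singularity of the drift along the diagonal $\{x_i = x_j\}$: the attraction–repulsion kernel is not globally Lipschitz and in fact blows up as two particles collide, so the global existence theorem cannot be invoked directly and the exit time $\tau$ cannot in general be taken to be $+\infty$. The localization above is precisely what isolates this difficulty, and the characterization of $\tau$ records that an explosion corresponds either to particles escaping to infinity or to a collision. Verifying that the cutoff coefficients $b_n$ are globally Lipschitz and that the constant diffusion causes no trouble is routine, so the substance of the argument is the consistency-and-patching step that assembles the local solution from the truncated ones.
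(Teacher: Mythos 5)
Your proposal is correct and takes essentially the same route as the paper: the paper merely observes that the right-hand side of \eqref{H2} is locally Lipschitz on the open phase space $\mathbb R(N)$ and invokes the standard local existence and uniqueness theory for such equations by citing \cite{A,F}, while you unfold the truncation-and-patching argument that lies behind those cited results. One cosmetic remark: at a finite explosion time one only concludes that the path exits every compact subset of $\mathbb R(N)$ (a limsup statement along the exit times $\tau_n$), not full convergence of $\|U(t)\|$ to infinity or of the minimal inter-particle distance to zero.
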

\section{Global solution in some particular cases}

In this section, we shall consider the case where $N=2$ and prove global existence for  \eqref{H2}. First, the deterministic case (i.e., $\sigma_1=\sigma_2=0$) is treated with null external forces $F_1=F_2 \equiv 0$. Second, the stochastic case (i.e., $\sigma_1+\sigma_2>0$) is treated but under the restriction that $d$ and $q$ satisfy the relations  $d> \max\{q-4, 2\}$ and $ q>2$ (therefore, in particular, $d>2$).
\subsection{ Deterministic case ($\sigma_1=\sigma_2=0) $}
The system  \eqref{H2} has the form
\begin{equation}\label{H3}
\begin{cases}
\begin{aligned}
  \frac{dx_i}{dt}=&v_i,\\
  \frac{dv_i}{dt}=&-\frac{\alpha_1 (x_i-x_j)}{||x_i-x_j||^p}
   + \frac{\alpha_1 \gamma (x_i-x_j)}{||x_i-x_j||^q}
   - \frac{\beta_1(v_i-v_j)}{||x_i-x_j||^p}
   - \frac{\beta_1 \gamma(v_i-v_j)}{||x_i-x_j||^q},
\end{aligned}
\end{cases}
\end{equation}
where $i,j=1, 2, i\ne j.$
\begin{theorem} \label{Th1} 
Let $1<p<q<\infty$ and  $q>2$. Then, for any initial value $(x^0, v^0)\in \mathbb R(2),$  \eqref{H3} has a unique global solution $(x(t), v(t))$ with values in $\mathbb R(2)$.
\end{theorem}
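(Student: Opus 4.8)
The plan is to exploit the two-body structure by passing to centre-of-mass and relative coordinates and then running an energy (Lyapunov) argument; uniqueness is already supplied locally by Theorem \ref{thm0} and is inherited by patching, so only global existence is at issue. Setting $X=\tfrac12(x_1+x_2)$, $V=\tfrac12(v_1+v_2)$, $y=x_1-x_2$ and $u=v_1-v_2$, and adding the two $v$-equations in \eqref{H3}, all interaction terms cancel, so that $\frac{dV}{dt}=0$ and $\frac{dX}{dt}=V$. Hence $V\equiv V(0)$ and $X(t)=X(0)+V(0)t$ grow at most linearly and never cause explosion. Subtracting the two $v$-equations, the relative motion obeys the closed system $\frac{dy}{dt}=u$ together with
\begin{equation*}
\frac{du}{dt}=-\frac{2\alpha_1 y}{\|y\|^p}+\frac{2\alpha_1\gamma y}{\|y\|^q}-\Big(\frac{2\beta_1}{\|y\|^p}+\frac{2\beta_1\gamma}{\|y\|^q}\Big)u .
\end{equation*}
Since $(x_1,x_2,v_1,v_2)$ is recovered linearly from $(X,V,y,u)$ and the condition $x_1\neq x_2$ is equivalent to $y\neq0$, global existence in $\mathbb R(2)$ reduces to showing, on every finite interval, that $u$ stays bounded and that $y$ stays bounded and bounded away from $0$.

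Next I would introduce the radial potential $U$ with $U'(s)=\frac{2\alpha_1}{s^{p-1}}-\frac{2\alpha_1\gamma}{s^{q-1}}$, so the conservative part of the relative force equals $-\nabla_y U(\|y\|)$, and the strictly positive friction coefficient $c(y)=\frac{2\beta_1}{\|y\|^p}+\frac{2\beta_1\gamma}{\|y\|^q}$. Defining the energy $E=\tfrac12\|u\|^2+U(\|y\|)$, differentiation along trajectories gives $\frac{dE}{dt}=-c(y)\|u\|^2\le0$, so $E(t)\le E(0)$ on $[0,\tau)$. The role of the hypothesis $q>2$ is precisely that $U(s)\sim \frac{2\alpha_1\gamma}{q-2}\,s^{-(q-2)}\to+\infty$ as $s\to0^+$; this singular term dominates the $s^{-(p-2)}$ contribution because $q>p$ (and dominates the logarithmic term when $p=2$), so $U$ is a genuine repulsive barrier at the origin. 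Moreover $U'$ vanishes only at the equilibrium distance $s=r$ (recall $\gamma=r^{q-p}$), where $U$ attains a global minimum $U(r)$, so $U$ is bounded below.

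From $E(t)\le E(0)$ I would read off the two needed bounds. First, $U(\|y(t)\|)\le E(0)$ together with $U(s)\to+\infty$ as $s\to0^+$ forces $\|y(t)\|\ge\delta>0$, where $\delta$ is fixed by the level $U(\delta)=E(0)$; this rules out collision. Second, $\tfrac12\|u(t)\|^2\le E(0)-U(r)$ yields a uniform bound $\|u(t)\|\le M$. The one genuinely delicate point is the upper bound on $\|y\|$: when $p>2$ the potential is not coercive at infinity (it tends to a finite constant), so the energy estimate alone does not confine $y$. This is resolved by the velocity bound: since $\frac{dy}{dt}=u$ with $\|u\|\le M$, one has $\|y(t)\|\le\|y(0)\|+Mt$, so $\|y\|$ grows at most linearly and stays finite on any $[0,T]$. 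Collecting these facts, on each finite interval the relative solution remains in a compact subset of $\{y\neq0\}$ while $u$ stays bounded, and together with the explicitly global $(X,V)$ this keeps the full solution inside a compact subset of $\mathbb R(2)$. By the continuation criterion underlying Theorem \ref{thm0}, the explosion time cannot be finite, so $\tau=\infty$ and the unique global solution exists.
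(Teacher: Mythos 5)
Your proof is correct, but it runs on a genuinely different engine than the paper's. Both arguments begin with the same reduction: sum (or centre-of-mass) coordinates evolve explicitly, and everything hinges on the relative system $\dot y=u$, $\dot u=-\nabla_y U(\|y\|)-c(y)u$. From there the paper does \emph{not} use the mechanical energy. It instead passes to the scalar variables $X=1/\|y\|$, $Y=\|u\|^2$, $Z=\langle y,u\rangle$, builds the ad hoc Lyapunov function $H=X^{q-4}+X^{q-2}+Y^2+MZ^2+Y+X^{-4}+M$, verifies $\frac{dH}{dt}\le M_1H$ through a long list of Young-type inequalities (valid on the constraint set $Y\ge X^2Z^2$), and concludes by comparison that $H(t)\le H(0)e^{M_1\tau_1}$, contradicting the blow-up criterion. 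Your energy--dissipation argument, $\frac{dE}{dt}=-c(y)\|u\|^2\le0$ with $E=\tfrac12\|u\|^2+U(\|y\|)$ and $U(s)\to+\infty$ as $s\to0^+$ (this is exactly where $q>2$ and $q>p$ enter, matching the paper's use of these hypotheses), is more conceptual and yields strictly stronger conclusions: a \emph{uniform-in-time} bound on $\|u\|$ and a uniform lower bound $\|y(t)\|\ge\delta$ ruling out collision for all time, versus the paper's exponentially growing bound $H(0)e^{M_1t}$; you also correctly spot and patch the one genuine subtlety (non-coercivity of $U$ at infinity when $p>2$) via the linear-growth bound $\|y(t)\|\le\|y(0)\|+Mt$. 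What the paper's clunkier $(X,Y,Z)$ framework buys in exchange is reusability: the identical variables and a nearly identical function $V=X^\theta+X^{-4}+Y^2+MZ^2+M$ drive the stochastic proof of Theorem \ref{Th3} via It\^o's formula, where your exact energy identity breaks down (the noise in $x$ generates It\^o correction terms in $U(\|y\|)$, destroying monotonicity of $E$), so the paper's method is the one that generalizes to $\sigma_1+\sigma_2>0$ while yours does not carry over without substantial modification.
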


\begin{proof}
As stated in Theorem \ref{thm0},  there is a unique solution $(x_1(t),x_2(t),$ $v_1(t),v_2(t))$ to  \eqref{H3} defined on an interval $[0,\tau_1),$ where $\tau_1$  denotes the explosion time. On $[0,\tau_1)$,  \eqref{H3} is equivalent to
\begin{equation*}
\begin{cases}
\begin{aligned}
  \frac{d(x_1+x_2)}{dt} &= v_1+v_2,\\
  \frac{d(v_1+v_2)}{dt} &= 0,\\
  \frac{d(x_1-x_2)}{dt} &= v_1-v_2,\\
  \frac{d(v_1-v_2)}{dt} &= -2\left[\frac{\alpha_1}{||x_1-x_2||^p}
    - \frac{\alpha_1\gamma}{||x_1-x_2||^q}\right](x_1-x_2)    \\
  &\enskip\enskip -2\left[\frac{\beta_1}{||x_1-x_2||^p}+\frac{\beta_1 \gamma}
      {||x_1-x_2||^q}\right](v_1-v_2).
\end{aligned}
\end{cases}
\end{equation*}
Thus,
\begin{equation}\label{H5}
\begin{cases}
\begin{aligned}
  x_1(t)+x_2(t) &= [v_1(0)+v_2(0)]t+x_1(0)+x_2(0), \\
  v_1(t)+v_2(t) &= v_1(0)+v_2(0),  \\
  \frac{d(x_1-x_2)}{dt} &= v_1-v_2, \\
  \frac{d(v_1-v_2)}{dt} &= -2\left[\frac{\alpha_1}{||x_1-x_2||^p}
     - \frac{\alpha_1\gamma}{||x_1-x_2||^q} \right](x_1-x_2)  \\
  &\enskip\enskip -2\left[\frac{\beta_1}{||x_1-x_2||^p}
     + \frac{\beta_1 \gamma}{||x_1-x_2||^q}\right](v_1-v_2).
\end{aligned}
\end{cases}
\end{equation}
So we put $\xi=x_1-x_2$ and $\eta=v_1-v_2$. In order to prove that $\tau_1=\infty$, it suffices to show that the solution starting in $\mathbb R^d_*=\{\xi \in \mathbb R^d\,{:}\,\xi\ne 0\}$ of the following system 
\begin{equation}\label{H6}
\begin{cases}
\begin{aligned}
  \frac{d\xi}{dt} &= \eta,\\
  \frac{d\eta}{dt} &= -2\left(\frac{\alpha_1}{||\xi||^p}
      -\frac{\alpha_1\gamma}{||\xi||^q}\right)\xi
      -2\left(\frac{\beta_1}{||\xi||^p}
      +\frac{\beta_1\gamma}{||\xi||^q}\right)\eta
\end{aligned}
\end{cases}
\end{equation}
is global. Obviously, $\tau_1$  is the explosion time of \eqref{H6}, too. Suppose that $\tau_1<\infty$. On $[0,\tau_1)$, we put $X=\frac{1}{||\xi||}, Y=||\eta||^2, Z= \langle\xi, \eta\rangle$. Then, it is easy to verify that $(X,Y,Z)$ satisfies $X(t)>0, Y(t)\geq X^2(t)Z^2(t)$ and also satisfies the following equations
\begin{equation}\label{H7}
\begin{cases}
\begin{aligned}
\frac{dX}{dt}&=-X^3Z,\\
\frac{dY}{dt}&=-4\alpha_1 X^pZ+4\alpha_1\gamma X^qZ-4 (\beta_1 X^p+\beta_1 \gamma X^q)Y,\\
\frac{dZ}{dt}&=Y-2\alpha_1 X^{p-2}+2\alpha_1\gamma X^{q-2}-2 (\beta_1 X^p+\beta_1 \gamma X^q) Z.
\end{aligned}
\end{cases}
\end{equation}
Furthermore, 
\begin{equation}\label{H7.8}
\limsup_{t\to \tau_1} [X(t)+Y(t)+|Z(t)|+X^{-1}(t)]=\infty.
\end{equation}
By introducing a function 
$$H=X^{q-4}+X^{q-2}+ Y^2+M Z^2+Y+X^{-4}+M$$
 with a sufficiently large $M> 0$, we observe that 
\begin{equation*}
\begin{aligned}
\frac{dH}{dt}=&-(q-4) X^{q-2}Z-(q-2) X^qZ\\
&+2 Y[-4\alpha_1 X^pZ+4\alpha_1\gamma X^qZ-4 (\beta_1 X^p+\beta_1 \gamma X^q)Y]\\
&+2M Z[Y-2\alpha_1 X^{p-2}+2\alpha_1\gamma X^{q-2}-2 (\beta_1 X^p+\beta_1 \gamma X^q) Z] \\
&-4\alpha_1 X^pZ+4\alpha_1\gamma X^qZ-4 (\beta_1 X^p+\beta_1 \gamma X^q)Y+4X^{-2}Z\\
=&-(q-2) X^qZ-8 \alpha_1 X^pYZ+8 \alpha_1 \gamma X^qYZ +2MYZ - 4M\alpha_1  X^{p-2} Z \\
&+(4M\alpha_1\gamma-q+4) X^{q-2} Z -4 \alpha_1 X^p Z +4\alpha_1 \gamma X^qZ\\
&-8(\beta_1 X^p+\beta_1 \gamma X^q)Y^2-4M (\beta_1 X^p+\beta_1 \gamma X^q)Z^2\\
&-4 (\beta_1 X^p+\beta_1 \gamma X^q)Y+4X^{-2}Z.
\end{aligned}
\end{equation*}
It is easily seen that, for a sufficient small $\epsilon >0$, it holds true that
\begin{align*}
&\epsilon X^qY +\epsilon^{-3} X^{q-2} \geq \epsilon X^{q+2} Z^2+\epsilon^{-3} X^{q-2} \geq 2 \epsilon^{-1}X^q |Z|, \\
&\epsilon X^pY^2 + M\beta_1 \gamma X^p Z^2 \geq 2 \sqrt{\epsilon M\beta_1 \gamma} X^p Y|Z|,\\ 
&\epsilon X^qY^2 + M\beta_1 \gamma X^q Z^2 \geq 2 \sqrt{\epsilon M\beta_1 \gamma} X^q Y|Z|,\\
&X^{q}|Z| + Z^2+\frac{1}{4\epsilon^2} \geq (X^{q}+\epsilon^{-1})|Z| \geq \max \{ X^q |Z|, X^{p} |Z|, X^{q-2} |Z|\},\\
&  X^{-4} + Z^2 \geq 2 X^{-2} |Z|, Z^2+M^2\geq 2M |Z|,\\
&(X^{q}+\epsilon^{-1})|Z| \geq X^{p-2}|Z| \quad \quad \quad\quad(\text {if } p\geq 2),\\
&(M^2X^{-2}+1) |Z| \geq M X^{p-2} |Z|\quad \quad (\text {if } p< 2).
\end{align*}
In addition, it is clear that  
$MY^2+MZ^2\geq 2MYZ.$
Then it follows that there exists $M_1>0$ such that  for  $X>0, Y\geq X^2Z^2, Z\in \mathbb R,$ $\frac{dH}{dt}$ is estimated by $ \frac{dH}{dt}\leq M_1 H$ on $[0,\tau_1)$. Therefore, by the comparison theorem, we obtain 
$$H(t)=X^{q-4}(t)+X^{q-2}(t)+ Y^2(t)+M Z^2(t)+Y(t)+X^{-4}(t)+M\leq H(0) e^{M_1\tau_1} $$ for all $t\in [0,\tau_1)$. Thus, due to \eqref{H7.8}, $\tau_1=\infty.$ Therefore, the solution of  \eqref{H3} must be global.
\end{proof}

\subsection{Stochastic case ($\sigma_1+\sigma_2>0) $}

In this subsection, we consider the stochastic case. The system  \eqref{H2} becomes 
\begin{equation}\label{H4}
\left\{ \begin{aligned}
 &dx_i = v_i dt + \sigma_i dw_i(t),\\
 &dv_i = \Big\{-\left[\frac{\alpha_1}{||x_i-x_j||^p}
   - \frac{\alpha_1 \gamma}{||x_i-x_j||^q}\right](x_i-x_j)   \\
 &{}\qquad\quad
   -\left[\frac{\beta_1}{||x_i-x_j||^p}
   +\frac{\beta_1 \gamma}{||x_i-x_j||^q}\right](v_i-v_j)\Big\}dt,
\end{aligned}
\right.
\end{equation}
where $i,j=1, 2, i\ne j.$
For \eqref{H4} the situation is not similar to that of the deterministic case. Precisely, if  $d>\max\{q-4, 2\}$ and  $q>2$ then the global existence is shown, while if $d=1$ or $2$ then some solution may explode at a finite time. 
\begin{theorem}\label{Th3}
Let $d>\max\{q-4, 2\}$ and  $q>2$. Then, for any initial value $(x^0, v^0)\in \mathbb R(2),$  \eqref{H4} has a unique global solution in $\mathbb R(2)$.
\end{theorem}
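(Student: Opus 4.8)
The plan is to follow the same reduction as in the proof of Theorem~\ref{Th1} and then, in place of the monotone-energy/comparison argument, to invoke a Khasminskii-type stochastic non-explosion test; the one genuinely new feature is the family of second-order It\^o terms created by the noise in the $x$-equations. First I would separate the centre-of-mass and relative motions. With $\xi=x_1-x_2$ and $\eta=v_1-v_2$ one has $d(v_1+v_2)=0$ and $d(x_1+x_2)=(v_1+v_2)\,dt+\sigma_1\,dw_1+\sigma_2\,dw_2$, so $x_1+x_2$ is an explicit Gaussian process that never explodes; hence, exactly as before, the explosion time $\tau$ of \eqref{H4} equals that of the reduced system
\begin{equation*}
d\xi=\eta\,dt+\sigma\,d\tilde w,\qquad
d\eta=\Big[-2\Big(\tfrac{\alpha_1}{\|\xi\|^{p}}-\tfrac{\alpha_1\gamma}{\|\xi\|^{q}}\Big)\xi-2\Big(\tfrac{\beta_1}{\|\xi\|^{p}}+\tfrac{\beta_1\gamma}{\|\xi\|^{q}}\Big)\eta\Big]dt,
\end{equation*}
where $\tilde w$ is a $d$-dimensional Brownian motion and $\sigma^{2}=\sigma_1^{2}+\sigma_2^{2}>0$; note that the noise enters only the $\xi$-equation. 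I would then pass to the same scalars $X=\|\xi\|^{-1}$, $Y=\|\eta\|^{2}$, $Z=\langle\xi,\eta\rangle$ and compute their It\^o differentials: their drifts are those of the deterministic system \eqref{H7} augmented by the curvature corrections $\tfrac{\sigma^{2}}{2}\Delta(\cdot)$, while $\Delta\|\xi\|^{s}=s(s+d-2)\|\xi\|^{s-2}$ gives, for instance, the extra drift $\tfrac{\sigma^{2}(3-d)}{2}X^{3}$ in $dX$ together with $(dX)^{2}=\sigma^{2}X^{4}\,dt$, $(dZ)^{2}=\sigma^{2}Y\,dt$ and the cross term $(dX)(dZ)=-\sigma^{2}X^{3}Z\,dt$. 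The algebraic bound $Y\ge X^{2}Z^{2}$ is pointwise Cauchy--Schwarz and is unaffected by the noise.

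Next I would apply the non-explosion criterion. Let $\tau_{n}$ be the first time that $X+Y+|Z|+X^{-1}$ reaches $n$, so that $\tau_{n}\uparrow\tau$; it then suffices to produce a proper function $H\ge0$ (that is, $H\to\infty$ as $X\to\infty$, as $X^{-1}\to\infty$, or as $Y\to\infty$) and a constant $M_{1}$ with $\mathcal L H\le M_{1}H$, where $\mathcal L$ is the generator of $(X,Y,Z)$. Indeed, It\^o's formula applied to $e^{-M_{1}t}H$ and then taking expectations yields $\mathbb E\big[e^{-M_{1}(t\wedge\tau_{n})}H(t\wedge\tau_{n})\big]\le H(0)$, hence $\mathbb E\,[H(t\wedge\tau_{n})]\le e^{M_{1}t}H(0)$; since $H$ tends to $\infty$ along $\{\tau_{n}\le t\}$ this forces $\mathbb P(\tau_{n}\le t)\to0$, i.e.\ $\tau=\infty$ almost surely, which after undoing the reduction gives the global solution in $\mathbb R(2)$.

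The whole difficulty, and the place where $d>2$ and $d>q-4$ enter, is the construction of $H$. Starting from the deterministic Lyapunov function $X^{q-4}+X^{q-2}+Y^{2}+MZ^{2}+Y+X^{-4}+M$, the corrections produced by $X^{-4}$ and by $Z^{2}$ are of lower order ($\sim X^{-2}$ and $\sim Y$) and are absorbed into $M_{1}H$, and the deterministic part of $\mathcal L H$ is controlled just as in Theorem~\ref{Th1}. The dangerous term is the It\^o correction of the potential-energy part: near collision the noise injects potential energy at the rate $\tfrac{\sigma^{2}}{2}(q-2)(q-d)X^{q}$, which is strictly positive precisely when $d<q$ and which is of higher order than any power of $X$ in the deterministic $H$. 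This injection has no counterpart in the deterministic case, where the mechanical energy is nonincreasing, so that argument cannot be transcribed; indeed on the slice $\{\eta=0\}$, where $Y=Z=0$ and the friction $-\beta_{1}\gamma X^{q}(Y^{2}+MZ^{2}+Y)$ vanishes, the injection cannot be absorbed by friction alone. The resolution is to let the dimension and the repulsion carry it: by $\Delta\|\xi\|^{s}=s(s+d-2)\|\xi\|^{s-2}$ the exponents $s\le2-d$ are superharmonic, so for $d>2$ a Bessel-type term $\|\xi\|^{2-d}=X^{d-2}$ detects collision while contributing a nonpositive correction, and $d>q-4$ is the threshold at which the residual corrections, brought down to order $X^{q-2}$ by $Y\ge X^{2}Z^{2}$ and a few Young inequalities, become dominated by the retained terms of $H$. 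I would therefore replace the deterministic $H$ by a dimension-adapted one: adjust the collision exponent into the admissible range fixed by $d$ and adjoin a term coupling $X$ and $Z$ which, through the repulsive drift $2\alpha_{1}\gamma X^{q-2}$ of the $Z$-equation in \eqref{H7}, generates a negative drift of order $X^{q}$ offsetting the injection even on $\{\eta=0\}$ (its cross variation $(dX)(dZ)=-\sigma^{2}X^{3}Z\,dt$ being of lower order), while completion of squares keeps $H$ nonnegative and proper.

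I expect the main obstacle to be exactly this last point: controlling, uniformly up to the singularity $\xi=0$, the competition between the noise-driven energy injection $\sim X^{q}$ and the stabilising effects of the high-dimensional diffusion and the repulsive force, with the verification splitting into the regimes dictated by $\max\{q-4,2\}$. This competition is the sole essential difference from the deterministic argument, and it is precisely what the restriction $d>\max\{q-4,2\}$ is designed to settle.
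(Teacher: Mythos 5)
Your framework is exactly the paper's: the same reduction to relative coordinates (its \eqref{H8}), the same scalars $X=\|\xi\|^{-1}$, $Y=\|\eta\|^{2}$, $Z=\langle\xi,\eta\rangle$ with correctly computed It\^o corrections (its \eqref{H10}), and the same Khasminskii stopping-time/expectation argument. You have also correctly located the one real difficulty: the deterministic Lyapunov function cannot be transcribed because its collision term $X^{q-2}$ acquires the It\^o injection $\tfrac{\sigma^{2}}{2}(q-2)(q-d)X^{q}$, which is of higher order than anything in $H$ and cannot be absorbed by friction on $\{\eta=0\}$.

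However, your resolution of that difficulty contains a genuine error and does not close. The superharmonicity claim is backwards: from $\Delta\|\xi\|^{s}=s(s+d-2)\|\xi\|^{s-2}$ one has $\Delta\|\xi\|^{s}\le 0$ exactly for $2-d\le s\le 0$; for $s<2-d$ both factors are negative and the Laplacian is \emph{positive}. Consequently your proposed collision detector $\|\xi\|^{2-d}=X^{d-2}$ sits on the boundary: it is harmonic, so its It\^o correction vanishes identically instead of being negative. This is fatal for the scheme, because the argument needs a \emph{strictly} negative correction of the right order. The paper's function (its \eqref{H12}) is $V=X^{\theta}+X^{-4}+Y^{2}+MZ^{2}+M$ with $\theta$ chosen strictly inside $\big(\max\{q-6,0\},\,\min\{d-2,q-2\}\big)$ --- an interval that is nonempty precisely under the hypotheses $d>\max\{q-4,2\}$, $q>2$ --- and the strict inequality $\theta<d-2$ produces the drift term $-\tfrac{(d-2-\theta)\theta\sigma^{2}}{2}X^{\theta+2}$ in \eqref{H14}, which is exactly what absorbs the $O(X^{\theta+2})$ remainders left by the Young inequalities used to dominate $X^{(q+\theta+2)/2}|Z|$, $X^{q-2}|Z|$, $X^{\theta+2}|Z|$ by the friction terms $X^{q}Z^{2}$, $X^{q}Y^{2}$; the constraints $\theta<q-2$ and $\theta>q-6$ are likewise forced by these dominations, not merely "gestured at" by $d>q-4$. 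With your harmonic choice $\theta=d-2$ that absorption mechanism is gone, and the surviving first-order term $-(d-2)X^{d}Z$ leaves, after Young against $X^{q}Z^{2}$, a remainder of order $X^{2d-q}$, which exceeds every term of your $H$ whenever $d>q-2$ (a case allowed by the hypotheses, e.g.\ $q-2<d<q$), so the key estimate $\mathcal LH\le M_{1}H$ is not obtained.

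Your backup device --- a cross term of the form $-cX^{2}Z$ exploiting the repulsive drift $2\alpha_{1}\gamma X^{q-2}$ of the $Z$-equation --- is not needed in the paper's proof and is not innocuous as proposed: restoring nonnegativity by completing the square introduces terms such as $X^{4}$ whose own It\^o corrections are positive of order $X^{6}$ when $d<6$, and the product rule applied to $X^{2}Z$ generates positive terms like $2cX^{4}Z^{2}$; none of these are estimated, and you yourself flag this as the main obstacle. So the proposal correctly frames the problem and the role of the hypotheses, but the heart of the proof --- an explicit Lyapunov function satisfying $\mathcal LH\le M_{1}H$ on $\{X>0,\ Y\ge X^{2}Z^{2}\}$ --- is missing, and the specific candidate you indicate does not work without the modification (a strictly superharmonic exponent in the range $\max\{q-6,0\}<\theta<\min\{d-2,q-2\}$) that the paper makes.
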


\begin{proof}
From Theorem \ref{thm0}, there exists a local solution of \eqref{H4} defined on $[0, \tau_1^*),$ where $\tau_1^*$ is an explosion time. In that interval we have
\begin{equation*}
\begin{cases}
\begin{aligned}
  v_1(t)+v_2(t) &= v_1(0)+v_2(0),\\
  x_1(t)+x_2(t) &= [v_1(0)+v_2(0)]t+\sigma_1 w_1(t)+\sigma_2 w_2(t)
          +x_1(0)+x_2(0), \\
  d(x_1-x_2) &= (v_1-v_2)dt + \sigma_1 dw_1(t)-\sigma_2 dw_2(t), \\
  d(v_1-v_2) &= \Big\{ -2\left[\frac{\alpha_1}{||x_1-x_2||^p}
          -\frac{\alpha_1\gamma}{||x_1-x_2||^q} \right](x_1-x_2)  \\
  &{}\quad -2\left[\frac{\beta_1}{||x_1-x_2||^p}
          +\frac{\beta_1 \gamma}{||x_1-x_2||^q}\right](v_1-v_2)\Big\} dt.
\end{aligned}
\end{cases}
\end{equation*}
Then $\tau_1^*$ becomes an explosion time of the following system
\begin{equation}\label{H8}
\begin{cases}
\begin{aligned}
  d\zeta &= \psi dt+ \sigma dw(t), \\
  d\psi &= \left[-2\left(\frac{\alpha_1}{||\zeta||^p}
    -\frac{\alpha_1\gamma}{||\zeta||^q}\right)\zeta
  -2\left(\frac{\beta_1}{||\zeta||^p}
  +\frac{\beta_1\gamma}{||\zeta||^q}\right)\psi \right]dt,
\end{aligned}
\end{cases}
\end{equation}
too, where $\zeta=x_1-x_2, \psi=v_1-v_2,  \sigma =\sqrt{\sigma_1^2+\sigma_2^2}$ and  $ w(t)=\frac{1}{\sigma}[\sigma_1w_1(t)-\sigma_2 w_2(t)]$ is also a $d$\,{-}\,dimensional Brownian motion in $(\Omega, \mathcal F, \{\mathcal F_t\}_{t\geq 0},\mathbb P)$. By putting
\begin{equation} \label{H9}
X=\frac{1}{||\zeta||}, \quad Y=||\psi ||^2, \quad Z = \langle\zeta,\psi\rangle
\end{equation}
and using the It\^o formula, it is easily obtained that on $[0,\tau^*_1), (X,Y,Z)$ with $X(t)>0, Y(t)\geq 0$ satisfies the equations:
\begin{equation}\label{H10}
\begin{cases}
\begin{aligned}
dX=&\big[-X^3Z-\frac{d-3}{2} \sigma^2 X^3\big]dt -\sigma X^3\langle\zeta, dw\rangle,\\
dY=&[-4\alpha_1 X^pZ+4\alpha_1\gamma X^qZ-4 (\beta_1 X^p+\beta_1 \gamma X^q)Y]dt,\\
dZ=&[Y-2\alpha_1 X^{p-2}+2\alpha_1 \gamma X^{q-2}-2(\beta_1 X^p+\beta_1 \gamma X^q)Z]dt +\sigma  \langle\psi , dw\rangle.
\end{aligned}
\end{cases}
\end{equation}
Let us define a sequence of stopping times by putting, for each integer $k\geq k_0,$ 
$$\tau_k=\inf\left \{t\geq 0\,{:}\,X(t)\notin (\frac{1}{k},k) \text{ or } Y(t)\notin [0,k)\right\},$$
where $k_0>0$ is a sufficiently large number such that $(X(0), Y(0))\in (\frac{1}{k_0},k_0) \times [0,k_0).$ We here use  convention that the infimum of the empty set is $\infty$. Since $\tau_k$ is nondecreasing as $k \rightarrow \infty$, there exists a limit $\tau_{\infty}=\lim_{k \rightarrow \infty} \tau_k$. It is clear that $\tau_{\infty}\leq \tau_1^*$ a.s. We can in fact show that $\tau_{\infty}=\infty$ a.s. Suppose the contrary, then there would exist $T>0$ and $\varepsilon \in (0,1)$ such that $\mathbb P\{\tau_{\infty}\leq T\}>\varepsilon.$ By denoting $\Omega_k=\{\tau_k \leq T\},$ there exists $k_1\geq k_0$ such that 
\begin{equation} \label{H11}
\mathbb P(\Omega_k)\geq \varepsilon \hspace{2cm} \text{ for all } k\geq k_1.
\end{equation}
Consider the following function in $C^2 (\mathbb R^2_+\times \mathbb R,\mathbb R_+):$
\begin{equation}\label {H12}
\begin{aligned} 
V(X,Y,Z)=& X^{\theta}+ X^{-4}+ Y^2+M Z^2+ M,
\end{aligned}
\end{equation}
where $M>0$ is a sufficiently large number  and $\theta$ is a fixed exponent such that $\max\{q-6, 0\}<\theta< \min  \{d-2, q-2\}.$ 
If  $(X(t), Y(t), Z(t)) \in \mathbb R^2_+\times \mathbb R,$ by using the It\^o formula, we get
\begin{equation} \label{H13}
\begin{aligned}
dV(X(t), Y(t), Z(t))=&f(X(t), Y(t), Z(t))dt\\
&+\langle g(X(t), Y(t), Z(t), \zeta(t), \psi(t)) ,dw(t)\rangle.
\end{aligned}
\end{equation}
Here,
\begin{equation} \label{H14}
\begin{aligned}
&f(X,Y,Z)=\\
&\theta X^{\theta-1} [-X^3Z-\frac{d-3}{2}\sigma^2 X^3]+\frac{1}{2} \theta (\theta-1)\sigma^2 X^{\theta+2}+4X^{-2} Z\\
&+2(d+2)\sigma^2 X^{-2}+ 2 Y[-4\alpha_1 X^pZ+4\alpha_1\gamma X^qZ-4 (\beta_1 X^p+\beta_1 \gamma X^q)Y]\\
&+2M Z[Y-2\alpha_1 X^{p-2}+2\alpha_1\gamma X^{q-2}-2 (\beta_1 X^p+\beta_1 \gamma X^q) Z]+\sigma^2 M Y \\
=&-\theta X^{\theta+2} Z-4\alpha_1 MX^{p-2}Z+2\alpha_1 \gamma M X^{q-2} Z-8\alpha_1 X^pYZ+8\alpha_1 \gamma X^q YZ\\
&+2M YZ+M\sigma^2 Y +4X^{-2} Z+2(d+2)\sigma^2 X^{-2}\\
&-\frac{(d-2 -\theta ) \theta \sigma^2}{2}  X^{\theta+2}-8(\beta_1 X^p+\beta_1\gamma X^q) Y^2 -4M (\beta_1 X^p+ \beta_1 \gamma X^q) Z^2.
\end{aligned}
\end{equation}
And $g$ is  a suitable function. As for the deterministic case, it holds true that
\begin{align*}
&M\beta_1\gamma X^{q} Z^2 +\frac{\epsilon^{2}}{M\beta_1\gamma} X^{\theta+2} \geq  2 \epsilon X^{\frac{q+\theta+2}{2}} |Z|, \\
&\epsilon X^pY^2 + M\beta_1 \gamma X^p Z^2 \geq 2 \sqrt{\epsilon M\beta_1 \gamma} X^p Y|Z|,\\ 
&\epsilon X^qY^2 + M\beta_1 \gamma X^q Z^2 \geq 2 \sqrt{\epsilon M\beta_1 \gamma} X^q Y|Z|,\\
&Y^2+Z^2\geq 2Y|Z|, Z^2+1\geq 2|Z|, Y^2+1\geq 2Y, X^{-4}+M^2\geq 2M X^{-2},
\end{align*}
with a sufficiently small $\epsilon>0$.  
When $p\geq 2$, since $\frac{q+\theta+2}{2}> \max \{q-2, \theta+2\}>p-2\geq 0$,  we have 
$$2\epsilon X^{\frac{q+\theta+2}{2}}|Z|+ M_1|Z| \geq \max\{ M^2 X^{\theta+2} |Z|, M^2 X^{p-2} |Z|, M^2 X^{q-2} |Z|\}$$
with a sufficiently large $M_1>0.$ 
Meanwhile, when $1<p<2$, we have 
$$MX^{-4}+2M Z^2 +M^3 \geq 2M X^{-2}|Z|+2M^2 |Z| \geq 2M X^{p-2} |Z|.$$
Thus, whatever $p$ is, there exists $M_2>0$ such that 
$$f(X,Y,Z)\leq M_2 V(X,Y,Z)  \text{ for every }X>0, Y\geq X^2Z^2, Z\in \mathbb R.$$
Since for every $t\geq 0$ it holds true from \eqref{H9} and \eqref{H13} that
$$
  (X(t\wedge \tau_k),Y(t\wedge \tau_k), Z(t\wedge \tau_k))
  \in \mathbb R^2_+ \times \mathbb R, \quad
  Y(t\wedge \tau_k) \geq X^2(t\wedge \tau_k)Z^2(t\wedge \tau_k),
$$
we have
\begin{equation*}
\begin{aligned}
\int_0^{s\wedge \tau_k}dV(X(t),Y(t), Z(t))\leq&\int_0^{s\wedge \tau_k} M_2 V(X(t),Y(t), Z(t))dt\\
&+ \int_0^{s\wedge \tau_k} \langle g(X(t),Y(t), Z(t)) ,dw_1(t)\rangle.
\end{aligned}
\end{equation*}
Taking the expectation on both side of this inequality gives
$$d\mathbb E V(X(s\wedge \tau_k),Y(s\wedge \tau_k), Z(s\wedge \tau_k))  \leq M_2 \mathbb E V(X(s\wedge \tau_k),Y(s\wedge \tau_k), Z(s\wedge \tau_k)) ds, $$
from which it follows that for every $s\geq 0$,
$$\mathbb E V(X(s\wedge \tau_k),Y(s\wedge \tau_k), Z(s\wedge \tau_k))\leq V(X(0), Y(0),Z(0))e^{M_2s}.$$
In particular,
\begin{equation} \label{H15}
\mathbb E V(X(T\wedge \tau_k),Y(T\wedge \tau_k), Z(T\wedge \tau_k))\leq V(X(0), Y(0),Z(0))e^{M_2T}.
\end{equation}
On the other hand, for every $\omega\in \Omega_k,$  $X(\tau_k)(\omega)\in \{k, \frac{1}{k}\}$  or $Y(\tau_k)(\omega)=k$. Then,
$$V(X(T\wedge \tau_k),Y(T\wedge \tau_k), Z(T\wedge \tau_k)) \geq a_k,$$
where
$a_k= \min\left\{k^{\theta}, k^4, k^2 \right\}.$ Combining this with  \eqref{H11}, we obtain that 
\begin{align*}
\mathbb E V(X(T\wedge \tau_k),Y(T\wedge \tau_k), Z(T\wedge \tau_k))&\geq \mathbb E[1_{\Omega_k}V(x_{T\wedge \tau_k},y_{T\wedge \tau_k})]\geq \varepsilon a_k.
\end{align*}
Therefore, due to  \eqref{H15}, 
$V(X(0), Y(0),Z(0))e^{M_2T}\geq \varepsilon a_k.$
Letting $k \rightarrow \infty$ we arrive at a contradiction  $\infty > V(X(0), Y(0),Z(0))e^{M_2T} \geq \infty.$ Thus 
$\tau_{\infty}=\infty$ a.s. and consequently, $\tau_1^*=\infty$ a.s. The proof is now complete.
\end{proof}

\section{Numerical examples}

In this section, we present some numerical results. First, we give examples which shows robustness of fish schooling; second, examples which suggest possibility of collision.

\subsection{Robustness}
Let us first observe examples that show that, if $\sigma_i$ are all sufficiently small, then the schooling is strongly robust.

Set $\alpha=1, \beta=0.5, r=1, p=3, q=4, \sigma_i = 0.015$, and $F_i(t,x_i,v_i)=-5v_i$. We consider $100 (=N)$ particles in the $d$-dimensional space, where $d=2,3$. An initial value $\boldsymbol x(0)$ is generated randomly in $[0,10]^{100d}$ and $\boldsymbol v(0)\equiv 0$. Figure 1 illustrates positions of particles and their velocity vectors at $t=0, 5, 10, 15$ in $\mathbb R^2$. Figure 2 does the same  at $t=0, 10, 20, 30$ in $\mathbb R^3$.

\newpage

 \begin{figure}[h]
\includegraphics[scale=0.9]{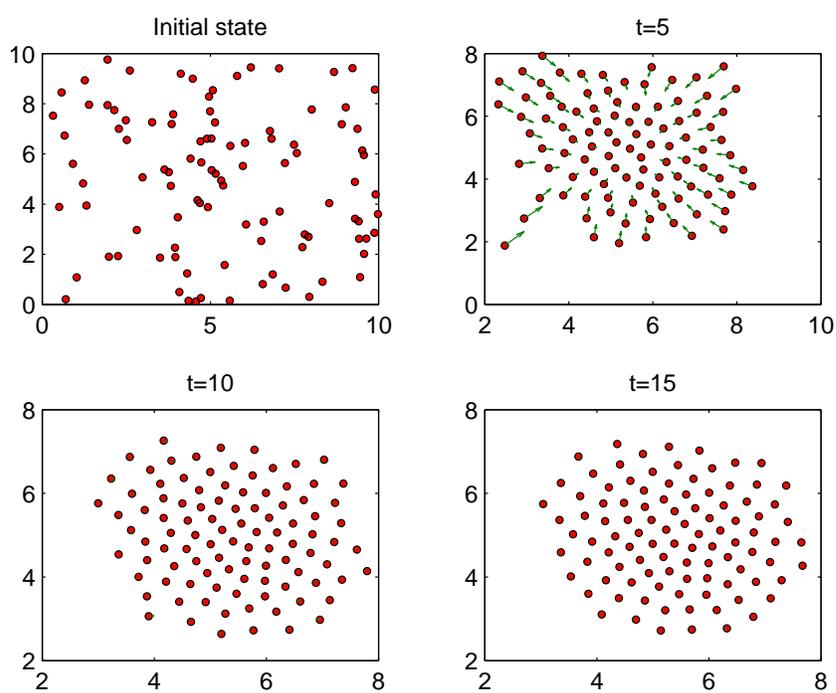}
 \caption{ Schooling in $2$-dim. space}
\end{figure}

\newpage

 \begin{figure}[h]
\includegraphics[scale=0.53]{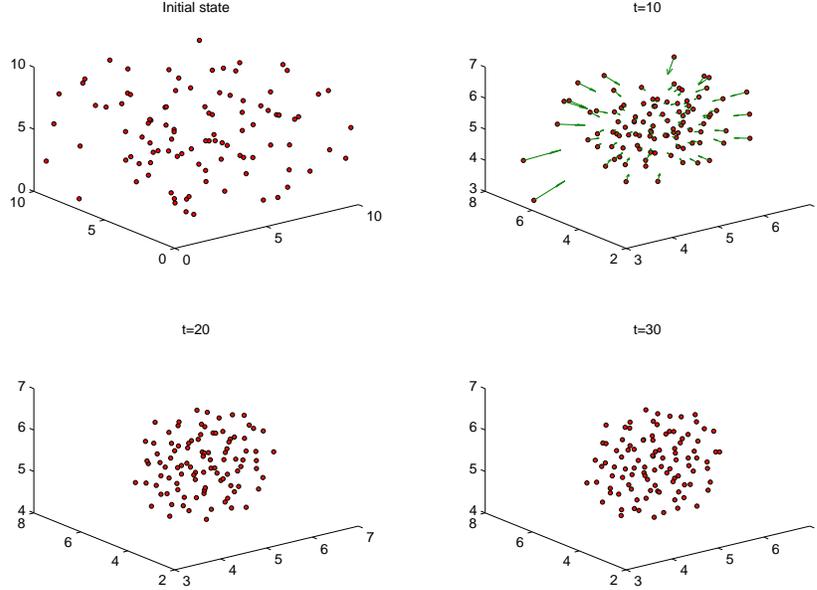}
 \caption{Schooling in $3$-dim. space}
\end{figure}
\subsection {Collision}
Let us next observe examples suggesting collision of two particles in the $d$-dimensional space, where $d=1,2$, with sufficiently small initial distance when $\sigma_i$ are not so small. 

For the case $d=1$, we set $\alpha=5, \beta=1, r=0.5, p=3, q=4, \sigma_i = \sigma,$  and  $F_i(t,x_i,v_i) = -v_i$. An initial value $\boldsymbol x(0)$  is generated randomly in $[0,1]^2$ and $\boldsymbol v(0)\equiv 0$. Figure 3 illustrates trajectories of two particles when $\sigma= 0, 0.15, 5.$  If $\sigma$ is small (i.e., $\sigma= 0, 0.15$),  collision does not take place. Meanwhile if  $\sigma$ is large ($\sigma= 5$), we observe that collision  takes place.

For the case $d=2$, we set $\alpha=7, \beta=19, r=1, p=3, q=4, \sigma_i = 9$, and $F_i(t,x_i,v_i) = -5v_i$. An initial value $\boldsymbol x(0)$ is generated randomly in $[0,5]^4$ and $\boldsymbol v(0)\equiv 0$. Figure 4 illustrates behavior of the distance of the two particles $x_1$ and $x_2$.

\newpage

\begin{figure}[h]
\includegraphics[scale=0.6]{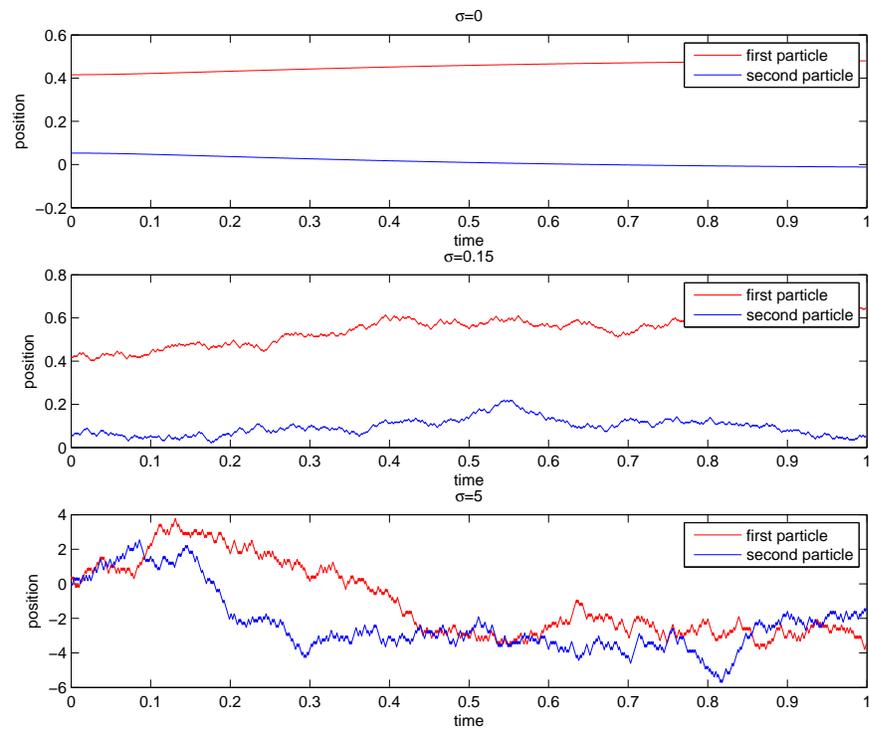}
 \caption{Collision in 1-dim. space}
\end{figure}

\newpage

\begin{figure}[h]
\includegraphics[scale=0.8]{}
 \caption{Collision in 2-dim. space}
\end{figure}


\begin{thebibliography}{99}
\bibitem{Ao}
I.~Aoki,
{\it A simulation study on the schooling mechanism in fish},
Bull. Japanese Soc. Scientific Fisheries {\bf 48} (1982), 1081-1088.
\bibitem{A} L.~Arnold,  
{\em Stochastic Differential Equations: Theory and Applications}, 
Wiley, New York, 1972.
\bibitem{CDFSTB}
S.~Camazine, J.~L.~Deneubourg, N.~R~Franks, J.~Sneyd, G.~Theraulaz and E.~Bonabeau,
{\it Self-Organization in Biological Systems},
Princeton University Press, 2001.
\bibitem {OCBC} M.~R.~D\'{}Orsogna, Y.~Chuang, A.~Bertozzi, L.~Chayes, {\it Self-propelled particles with soft-core interactions: patterns, stability and collapse},  Phys. Rev. Lett. {\bf 96} (2006), 104302.
\bibitem{F} A.~Friedman, 
{\em Stochastic Differential Equations and their Applications}, 
Academic press, New York, 1976.
\bibitem{HW}
A.~Huth and C.~Wissel,
{\it The simulation of the movement of fish school},
J. Theor. Biol. {\bf 156} (1992), 365-385.
\bibitem {OKMI} T.~Oboshi, S.~Kato, A.~Mutoh, H.~Itoh, {\it Collective or scattering: evolving schooling behaviors to escape from predator}, Artificial Life, MIT Press, Cambridge, MA, {\bf VIII} (2002), 386-389.
\bibitem{ROS} R.~Olfati-saber,  {\it Flocking for multi-agent dynamic systems: Algorithms and theory}, IEEE Trans. Automat. Control, {\bf 51} (2006), 401-420.
\bibitem{Re}
C.~W.~Reynolds,
{\it Flocks, herds, and schools: a distributed behavioral model}, 
Computer Graphics {\bf 21} (1987), 25-34.
\bibitem{VCBO} T.~Vicsek, A.~Czir\'{o}k, E.~Ben-Jacob, I.~Cohen, and O.~Shochet, {\it Novel type of phase transition in a system of self-driven particles}, Phys. Rev. Lett., {\bf 75} (1995), 1226-1229.
\bibitem{WL}
K.~Warburton and J.~Lazarus,
{\it Tendency-distance models of social cohesion in animal groups}, J. Theor. Biol. {\bf 150} (1991), 473-488.
\end{thebibliography}
\end{document}